\theoremstyle{definition}
\newtheorem{thm}{Theorem}[section]
\newtheorem*{thm*}{Theorem}
\newtheorem{exmpl}[thm]{Example}
\newtheorem{cor}[thm]{Corollary}
\newcommand{\id}{{\rm id}}
\newcommand{\fs}{{\mathfrak S}}
\numberwithin{equation}{section}
\title[Segments and Genocchi numbers]{Cardinality of $\ell_1$-Segments and Genocchi Numbers}
\author[C. Zara]{Catalin Zara}
\address{Department of Mathematics, University of Massachusetts
Boston, MA 02125}
\email{catalin.zara@umb.edu}
\date{April 18, 2013}
\begin{document}

\begin{abstract} 
We prove that the Genocchi numbers of first and second kind give the cardinality  of certain segments in permutation spaces $\fs_n$ with respect to 
the $\ell_1$-distance. Experimental data suggests that those segments have
maximal cardinality among all segments in the corresponding spaces.
\end{abstract}

\maketitle

\tableofcontents

\section{Introduction}

For a positive integer $n$ let $\fs_n$ be the group of permutations of $[n]=\{1, 2,\ldots, n\}$. We define a distance $D$ on $\fs_n$ by
\begin{equation*}
D(u,v) = \sum_{i=1}^n |u(i)-v(i)|\; .
\end{equation*}
This distance is right invariant: $D(uw, vw) = D(u,v)$ for every $u,v,w \in \fs_n$. For $u \in \fs_n$, the segment $[\id, u]$ is the set
\begin{equation*}
[\id, u] = \{ v \in \fs_n \mid D(\id, v) + D(v, u) = D(\id , u)\}\; .
\end{equation*}
Then $v \in [\id, u]$ if and only if
\begin{equation}\label{eq:segment_cond}
\min(i,u(i)) \leqslant v(i) \leqslant \max(i, u(i))
\end{equation}
for all $i=1,2,\ldots, n$. Experimental data suggests that the maximal 
cardinality of a segment  $[\id, u]$ in $\fs_n$ is attained for
\begin{equation*}
u = w_n = 
\begin{cases}
(m\!+\!1 \, m\!+\!2\; \ldots 2m \; 1 \; 2\; \ldots m) \; ,  \text{ if } n=2m\\
(m\!+\!1 \, m\!+\!2\; \ldots 2m\!+\!1 \; 1 \; 2\; \ldots m)\; , \text{ if } n=2m\!+\! 1
\end{cases}\; .
\end{equation*}
For example, $w_1=(1)$, $w_2 = (21)$, $w_3 = (231)$, 
$w_4= (3412)$, $w_5= (34512)$, \ldots .

The sequence $\# [\id, w_n]$ starts with 1, 2, 3, 8, 17, 56, 155, 608, 2073, 9440, ...  
and a search on the Online Encyclopedia of Integer Sequences (at oeis.org) shows 
a match with the sequence ``A099960: An interleaving of the Genocchi numbers 
of the first and second kind,'' from the fourth term on. 
The goal of this article is to give a bijective proof of this 
characterization of $\#[\id, w_n]$, the cardinality of the segment $[id, w_n]$.

\section{Genocchi Numbers}

The Genocchi numbers have combinatorial descriptions in terms of Dumont permutations. The Genocchi number of the first kind $G_{2n+2}$ is the cardinality of the set $B_{2n}$ of permutations $\pi \in \fs_{2n}$ such that 
\begin{equation}\label{eq:gen1_cond}
\pi(2i) \leqslant 2i \leqslant \pi(2i\!-\!1) 
\end{equation}
for all $i=1, \ldots, n$.  (\cite{dum-bar81}, \cite{dum74}). Note that $\pi(2n\!-\!1) = 2n$ for all $\pi \in B_{2n}$. For example, $G_6=3$ is the cardinality of 
\begin{equation*}
B_4=\{ (2143), (3142), (3241)\}\; .
\end{equation*}

The Genocchi median (or number of the second kind)  $H_{2n+1}$ is the cardinality of the set $C_{2n}$ of permutations $\pi \in \fs_{2n}$ such that 
\begin{equation}\label{eq:gen2_cond} 
\pi(2i) < 2i \leqslant \pi(2i\!-\!1)
\end{equation}
for  $i=1, \ldots, n$.  (\cite{dum-rad94}). Note that $\pi(2)=1$ and $\pi(2n\!-\!1) = 2n$ 
for all $\pi \in C_{2n}$. For example, $H_7=8$ is the cardinality of 
\begin{align*}
C_6 = \{ & (415263), (315263), (314265), (514263), \\
			& (215364), (214365), (415362), (514362)\}
\end{align*}

We prove that $\#[\id, w_{2m}] = H_{2m+3}$ and $\# [\id, w_{2m+1}] = G_{2m+4}$ 
by establishing explicit bijections between $[\id, w_{2m}]$ and $C_{2m+2}$ and 
between $[\id, w_{2m+1}]$ and $B_{2m+2}$.

\section{Cardinality of Segments}

The main results and their proofs are similar. 
We start with the case $n=2m\!+\!1$.

Define $\rho\colon \fs_{2m+1} \to \fs_{2m+2}$ by 
\begin{equation*}
\rho(u) = (u(1) \; u(2) \; \ldots \; u(2m) \; 2m\!+\!2 \; u(2m\!+\!1))
\end{equation*}
In other words, $\rho$ inserts the value $2m+2$ between on 
position $2m+1$ and shifts the last position to the right by one space. For example, 
if $m=1$ and $u=(231) \in \fs_3$, then $\rho(u) = (2341) \in \fs_4$.

\begin{thm}
Define the function  $g\colon \fs_{2m+1} \to \fs_{2m+2}$ by
$g(u) = \rho(\alpha u^{-1}\beta^{-1})\; ,$
where $\alpha = (1 \, 3\, \ldots\; 2m\!+\!1\; 2\, 4\, \ldots\, 2m)$ and 
$\beta = (2 \, 4\, \ldots\, 2m \, 2m\!+\!1\, 1\, 3 \,  \ldots \, 2m\!-\!1) $. 
Then 
\begin{equation*}
u \in [\id, w_{2m+1}] \Longleftrightarrow g(u) \in B_{2m+2}\; .
\end{equation*}
\end{thm}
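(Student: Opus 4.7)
The plan is to convert both sides of the equivalence into explicit inequalities on the entries of $p=u^{-1}$ and verify that the two systems coincide. First I would unpack \eqref{eq:segment_cond} using the definition of $w_{2m+1}$: membership in $[\id, w_{2m+1}]$ is equivalent to $i \leqslant u(i) \leqslant m+i$ for $i \leqslant m+1$ and $i-m-1 \leqslant u(i) \leqslant i$ for $i \geqslant m+2$. Inverting, the preimage $p(j)=u^{-1}(j)$ must lie in an explicit set $S_j\subset\{1,\ldots,2m+1\}$; a direct case split yields $S_j = [1,j]\cup[m+2,\, j+m+1]$ for $j\leqslant m$, $S_{m+1}=\{1,\ldots,2m+1\}$, and $S_{m+1+k}=[k+1,m+1]\cup[m+1+k,\,2m+1]$ for $1\leqslant k\leqslant m$.

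Next I would unpack $g(u)$ using the explicit formulas for the ingredients: $\beta^{-1}$ satisfies $\beta^{-1}(2k)=k$ and $\beta^{-1}(2k-1)=m+1+k$ for $1\leqslant k\leqslant m$, together with $\beta^{-1}(2m+1)=m+1$, while $\alpha(k)=2k-1$ for $k\leqslant m+1$ and $\alpha(k)=2(k-m-1)$ for $k\geqslant m+2$. Combining with the definition of $\rho$ gives $g(u)(2i)=\alpha(p(i))$ and $g(u)(2i-1)=\alpha(p(m+1+i))$ for $1\leqslant i\leqslant m$, along with $g(u)(2m+1)=2m+2$ and $g(u)(2m+2)=\alpha(p(m+1))$. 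The Dumont inequalities at $i=m+1$ then hold automatically: $g(u)(2m+2)\leqslant 2m+2$ because $\alpha$ takes values in $\{1,\ldots,2m+1\}$, and $g(u)(2m+1)\geqslant 2m+2$ by construction of $\rho$. This accounts for the fact that $p(m+1)$ is unconstrained on the segment side.

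The heart of the argument is to verify the remaining inequalities $\alpha(p(i))\leqslant 2i$ and $\alpha(p(m+1+i))\geqslant 2i$ for $1\leqslant i\leqslant m$. I would split each according to whether the argument of $\alpha$ is $\leqslant m+1$ or $\geqslant m+2$: the ``odd'' branch of $\alpha$ reduces the first inequality to $p(i)\leqslant i$ and the second to $p(m+1+i)\geqslant i+1$; the ``even'' branch reduces them to $p(i)\leqslant i+m+1$ and $p(m+1+i)\geqslant m+1+i$. Matching the four sub-inequalities with the two halves of $S_i$ and $S_{m+1+i}$ closes the equivalence. The main obstacle is exactly this bookkeeping: the parity split inside $\alpha$ must line up with the even/odd positions demanded by \eqref{eq:gen1_cond}, while the re-indexing by $\beta^{-1}$ must convert ``argument of $u^{-1}$'' into ``index of the Dumont pair''. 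The choices of $\alpha$ and $\beta$ are tailor-made for this correspondence, so no individual step is delicate, but the four-way case analysis must be carried out without error.
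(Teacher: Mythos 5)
Your proposal is correct and follows essentially the same route as the paper: push the Dumont conditions at positions $2i$ and $2i-1$ through $\beta^{-1}$ and the two branches of $\alpha$ to get membership conditions on $u^{-1}(i)$ and $u^{-1}(m+1+i)$, match these with the segment condition read off value by value, and note that the $i=m+1$ pair and the value $m+1$ are automatically unconstrained. (Incidentally, your interval $[i+1,m+1]$ for $u^{-1}(m+1+i)$ is the correct one; the paper's $[i,m+1]$ is a harmless off-by-one typo.)
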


\begin{proof}
Let $u \in \fs_{2m+1}$. Then 
\begin{align*}
g(u)&(2m\!+\!1) = 2m\!+\!2 \\
g(u)&(2m\!+\!2) = \alpha u^{-1}\beta^{-1}(2m\!+\!1) \leqslant 2m\!+\!1 \; ,
\end{align*}
hence conditions \eqref{eq:gen1_cond} are satisfied by all $g(u)$ for $i=m\!+\!1$. The \emph{value} $m\!+\!1$ can occur on any position of a permutation $u \in [\id, w_{2m+1}]$.

Let $1 \leqslant i \leqslant m$. Then
\begin{equation*}
g(u)(2i)  = \rho(\alpha u^{-1}\beta^{-1})(2i) = \alpha(u^{-1}(\beta^{-1}(2i))) = \alpha(u^{-1}(i))\; .
\end{equation*}
Then the condition $g(u)(2i) \leqslant 2i$ is equivalent to 
\begin{equation*}
u^{-1}(i) \in [1, i] \cup [m\!+\!2, m\!+\!i\!+\!1]\; ,
\end{equation*}
and that is the condition for the \emph{value} $i$ to occur in $u$ on a position that satisfies condition \eqref{eq:segment_cond} for $\id$ and $w_{2m+1}$.
Similarly,
\begin{equation*}
g(u)(2i\!-\!1)  = \rho(\alpha u^{-1}\beta^{-1})(2i\!-\!1) = \alpha(u^{-1}(\beta^{-1}(2i\!-\!1))) = \alpha(u^{-1}(i\!+\!m\!+\!1))\; .
\end{equation*}
Then the condition $g(u)(2i\!-\!1) \geqslant 2i$ is equivalent to 
\begin{equation*}
u^{-1}(i\!+\!m\!+\!1) \in [i, m\!+\!1] \cup [m\!+\!i\!+\!1, 2m\!+\!1]\; ,
\end{equation*}
and that is the condition for the \emph{value} $i\!+\!m\!+\!1$ to occur in $u$ on a position that satisfies condition \eqref{eq:segment_cond} for $\id$ and $w_{2m+1}$.

To summarize: $g(u)$ satisfies conditions \eqref{eq:gen1_cond} for positions $1, 2, \ldots, 2m$ if and only if $u$ satisfies \eqref{eq:segment_cond} for values $1, 2, \ldots,  m, m\!+\!2, \ldots, 2m$. Since $g(u)$ always satisfies \eqref{eq:gen1_cond} for positions $2m\!+\!1, 2m\!+\!2$ and $u$ always satisfies \eqref{eq:segment_cond} for the value $m\!+\!1$, the equivalence is established and the proof is complete.
\end{proof}

\begin{cor}
The cardinality of the $\ell_1$-segment $[\id, w_{2m+1}]$ is  $G_{2m+4}$.
\end{cor}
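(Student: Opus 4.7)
The plan is to leverage the theorem, which provides the equivalence $u \in [\id, w_{2m+1}] \Longleftrightarrow g(u) \in B_{2m+2}$. To convert this equivalence into an equality of cardinalities, I would upgrade $g$ to a bijection between $[\id, w_{2m+1}]$ and $B_{2m+2}$, and then invoke the combinatorial characterization $\#B_{2m+2} = G_{2m+4}$ recorded in Section 2.

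First I would check that $g$ is injective on all of $\fs_{2m+1}$. The map $g$ factors as the composition of inversion $u \mapsto u^{-1}$, left multiplication by $\alpha$, right multiplication by $\beta^{-1}$, and the insertion map $\rho$. The first three are bijections of $\fs_{2m+1}$, and $\rho$ is injective because its preimage can be recovered by deleting the entry $2m\!+\!2$, which by construction always sits in position $2m\!+\!1$. In particular the image of $\rho$ is exactly $\{\pi \in \fs_{2m+2} \mid \pi(2m\!+\!1) = 2m\!+\!2\}$.

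Next I would verify that $B_{2m+2}$ is contained in the image of $\rho$, so that every $\pi \in B_{2m+2}$ has a (unique) $g$-preimage in $\fs_{2m+1}$. This is precisely the remark made right after the definition of $B_{2n}$: condition \eqref{eq:gen1_cond} with $i = m\!+\!1$ reads $\pi(2m\!+\!2) \leqslant 2m\!+\!2 \leqslant \pi(2m\!+\!1)$, which forces $\pi(2m\!+\!1) = 2m\!+\!2$. Concretely, given $\pi \in B_{2m+2}$, one writes $\pi = \rho(\sigma)$ for a unique $\sigma \in \fs_{2m+1}$ and sets $u = \beta^{-1}\sigma^{-1}\alpha$, so that $g(u) = \pi$.

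Combining the two points with the theorem, $g$ restricts to a bijection $[\id, w_{2m+1}] \to B_{2m+2}$, hence $\#[\id, w_{2m+1}] = \#B_{2m+2} = G_{2m+4}$. There is no serious obstacle here; the only point requiring any care is to confirm that $g$ is not merely an injection whose image lies inside $B_{2m+2}$ but is in fact surjective onto it, and this is exactly what the image description of $\rho$ together with the forced value $\pi(2m\!+\!1) = 2m\!+\!2$ supplies.
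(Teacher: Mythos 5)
Your proposal is correct and follows the same route as the paper, whose entire proof is the one-line observation that $g$ induces a bijection from $[\id, w_{2m+1}]$ to $B_{2m+2}$; you have simply supplied the details (injectivity of $g$ via the factorization through $\rho$, and surjectivity onto $B_{2m+2}$ via the forced value $\pi(2m\!+\!1)=2m\!+\!2$) that the paper leaves implicit. No gaps.
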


\begin{proof}
The function $g$ induces a bijection from $[\id, w_{2m+1}]$ to $B_{2m+2}$.
\end{proof}

\begin{exmpl}
For $m=1$ we have $w_3 = (231)$ and 
$$[(123), (231)] = \{ (123), (132), (231) \}$$
and
$$B_4=\{ (2143), (3142), (3241)\}\; .$$
The function $u \to \rho((132)u^{-1}(312))$ sends $(123)$ to $(2143)$, $(132)$ to $(3142)$, and $(231)$ to $(3241)$, establishing a bijection between $[(123),(231)]$ and $B_4$.
\end{exmpl}

%%%%%%%%%%%%%

The case $n=2m$ is very similar. Define 
$\eta \colon \fs_{2m} \to \fs_{2m+2}$ by
\begin{equation*}
\eta(u) = (u(1)\!+\!1 \; 1\; u(2)\!+\!1 \; \ldots \; u(2m\!-\!1)\!+\!1 \; 2m\!+\!2\; u(2m)\!+\!1)
\end{equation*}
Explicitly, $\eta$ inserts the value 1 on the second position, $2m\!+\!2$ on position 
$2m\!+\!1$, increases the other values by 1 and shifts them to fill the remaining positions. For example, $\eta((2413)) = (315264)$.

\begin{thm}\label{thm:max_even}
Define the function $h\colon \fs_{2m} \to \fs_{2m+2}$ by
$h(u) = \eta(\alpha u^{-1}\beta^{-1})\; $
where
$\alpha = (1 \, 3\, \ldots\; 2m\!-\!1\; 2\, 4\, \ldots\, 2m)$,  
$\beta = (3 \, 5\, \ldots \, 2m\!-\!1\, 1\, 2m\, 2 \, 4\, \ldots\, 2m\!-\!2)$. 
Then 
\begin{equation*}
u \in [\id, w_{2m}] \Longleftrightarrow h(u) \in C_{2m+2}\; .
\end{equation*}
\end{thm}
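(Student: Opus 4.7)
The plan is to mirror the proof of the odd case, verifying that the insertions performed by $\eta$ automatically handle the two boundary instances of condition \eqref{eq:gen2_cond}, while the remaining instances translate---via the change of variables $v = \alpha u^{-1}\beta^{-1}$---into the segment conditions \eqref{eq:segment_cond} on individual values of $u$. Since $\eta$ places the value $1$ on position $2$ and the value $2m+2$ on position $2m+1$, we get $h(u)(2) = 1 < 2$ and $h(u)(2m+1) = 2m+2$; and because the remaining entries of $h(u)$ lie in $\{2, \ldots, 2m+1\}$, we also have $h(u)(1) \geqslant 2$ and $h(u)(2m+2) \leqslant 2m+1 < 2m+2$. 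Thus both the $i = 1$ and $i = m+1$ instances of \eqref{eq:gen2_cond} are automatic for every $u \in \fs_{2m}$.

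For the interior indices $i \in \{2, \ldots, m\}$, the shifting built into $\eta$ gives $h(u)(2i-1) = v(2i-2)+1$ and $h(u)(2i) = v(2i-1)+1$. Writing $j = i-1 \in \{1, \ldots, m-1\}$, the two conditions in \eqref{eq:gen2_cond} become $v(2j) \geqslant 2j+1$ and $v(2j+1) \leqslant 2j$. Using the explicit formulas for $\beta^{-1}$ (which sends the odd value $2j+1$ to position $j$ and the even value $2j$ to position $j+m+1$) and the two-branch formula for $\alpha$ (with $\alpha(k) = 2k-1$ for $k \leqslant m$ and $\alpha(k) = 2(k-m)$ for $k > m$), these inequalities can be rewritten as position constraints on $u^{-1}(j)$ and $u^{-1}(m+j+1)$. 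A direct case analysis yields $u^{-1}(j) \in [1, j] \cup [m+1, m+j]$ and $u^{-1}(m+j+1) \in [j+1, m] \cup [m+j+1, 2m]$, which are precisely \eqref{eq:segment_cond} for $\id$ and $w_{2m}$ applied to the values $j$ and $m+j+1$ respectively.

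As $j$ ranges over $\{1, \ldots, m-1\}$, these conditions cover the values $1, \ldots, m-1$ and $m+2, \ldots, 2m$, leaving the two middle values $m$ and $m+1$ unconstrained; direct inspection of $w_{2m}$ confirms that both may appear in any position of $u \in [\id, w_{2m}]$, matching the fact that the two boundary $C_{2m+2}$ conditions are automatic. The main obstacle will be the bookkeeping with the split formulas for $\alpha$ and $\beta^{-1}$ over odd versus even arguments; once these branches are handled separately, the translation of inequalities is routine and parallels the odd case.
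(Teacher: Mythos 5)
Your proposal is correct and follows essentially the same route as the paper's proof: the $i=1$ and $i=m+1$ instances of \eqref{eq:gen2_cond} are automatic from the placement of $1$ and $2m+2$ by $\eta$, and the interior instances translate via $\beta^{-1}$ and the two-branch formula for $\alpha$ into exactly the position constraints $u^{-1}(j)\in[1,j]\cup[m+1,m+j]$ and $u^{-1}(m+j+1)\in[j+1,m]\cup[m+j+1,2m]$, which are \eqref{eq:segment_cond} for those values (your reindexing $j=i-1$ is purely notational). The bookkeeping you flag as the remaining obstacle is precisely what the paper's proof carries out, and your stated intermediate formulas all check out.
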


\begin{proof}
Let $u \in \fs_{2m}$. Then 
\begin{align*}
h(u)&(1) = \alpha u^{-1}\beta^{-1}(1) + 1 = \alpha(u^{-1}(m))+1 \geqslant 2\\
h(u)&(2) = 1 < 2\\
h(u)&(2m\!+\!1) = 2m\!+\!2 \geqslant 2(m\!+\!1)\\
h(u)&(2m\!+\!2) = \alpha u^{-1}\beta^{-1}(2m) +1 = \alpha(u^{-1}(m\!+\!1))\!+\!1 < 2m\!+\!2 \; ,
\end{align*}
hence conditions \eqref{eq:gen2_cond} are satisfied by all $h(u)$ for $i=1$ and $i=m\!+\!1$. The \emph{values} $m$ and $m\!+\!1$ can occur on any position of a permutation $u \in [\id, w_{2m}]$.

Let $2 \leqslant i \leqslant m$. Then
\begin{equation*}
h(u)(2i)  = \eta(\alpha u^{-1}\beta^{-1})(2i) = \alpha(u^{-1}(\beta^{-1}(2i\!-\!1)))+1 = \alpha(u^{-1}(i\!-\!1))+1\; .
\end{equation*}
Then the condition $h(u)(2i)<2i$ is equivalent to 
\begin{equation*}
u^{-1}(i\!-\!1) \in [1, i\!-\!1] \cup [m\!+\!1, m\!+\!i\!-\!1]\; ,
\end{equation*}
and that is the condition for the \emph{value} $i\!-\!1$ to occur in $u$ on a position that satisfies condition \eqref{eq:segment_cond} for $\id$ and $w_{2m}$.
Similarly,
\begin{equation*}
h(u)(2i\!-\!1)  = \eta(\alpha u^{-1}\beta^{-1})(2i\!-\!1) = \alpha(u^{-1}(\beta^{-1}(2i\!-\!2)))+1 = \alpha(u^{-1}(i\!+\!m))+1\; .
\end{equation*}
Then the condition $h(u)(2i) \geqslant 2i$ is equivalent to 
\begin{equation*}
u^{-1}(i\!+\!m) \in [i, m] \cup [m\!+\!i, 2m]\; ,
\end{equation*}
and that is the condition for the \emph{value} $i+m$ to occur in $u$ on a position that satisfies condition \eqref{eq:segment_cond} for $\id$ and $w_{2m}$.

To summarize: $h(u)$ satisfies conditions \eqref{eq:gen2_cond} for positions $3, 4, \ldots, 2m$ if and only if $u$ satisfies \eqref{eq:segment_cond} for values $1, 2, \ldots, m\!-\!1, m\!+\!2, \ldots, 2m$. Since $h(u)$ always satisfies \eqref{eq:gen2_cond} for positions $1, 2, 2m\!+\!1, 2m\!+\!2$ and $u$ always satisfies \eqref{eq:segment_cond} for values $m, m\!+\!1$, the equivalence is established and the proof is complete.
\end{proof}

\begin{cor}
The cardinality of the $\ell_1$-segment $[\id, w_{2m}]$ is $H_{2m+3}$.
\end{cor}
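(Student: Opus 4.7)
The plan is to follow the template set by the proof of the odd-case corollary: extract a bijection from the logical equivalence in Theorem \ref{thm:max_even}. Since $H_{2m+3} = \# C_{2m+2}$ by definition, the corollary will follow once I verify that $h$ restricts to a bijection from $[\id, w_{2m}]$ onto $C_{2m+2}$.

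Injectivity is essentially automatic. The map $u \mapsto \alpha u^{-1}\beta^{-1}$ is a bijection of $\fs_{2m}$, being a composition of left/right translations by fixed permutations with the inversion involution. The map $\eta\colon \fs_{2m} \to \fs_{2m+2}$ is injective because $u$ is recovered from $\eta(u)$ by reading off positions $1, 3, 5, \ldots, 2m-1, 2m, 2m+2$ and subtracting one. Composing these facts shows that $h$ is injective on all of $\fs_{2m}$, and by Theorem \ref{thm:max_even} it carries $[\id, w_{2m}]$ into $C_{2m+2}$.

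For surjectivity I would identify the image of $\eta$ as precisely the set of $\pi \in \fs_{2m+2}$ satisfying $\pi(2) = 1$ and $\pi(2m+1) = 2m+2$. Every $\pi \in C_{2m+2}$ is forced to satisfy these two equalities by the boundary instances of \eqref{eq:gen2_cond}: $\pi(2) < 2$ forces $\pi(2) = 1$, and $\pi(2m+1) \geqslant 2m+2$ forces $\pi(2m+1) = 2m+2$. Consequently each $\pi \in C_{2m+2}$ has a unique $\eta$-preimage $v \in \fs_{2m}$, which in turn has a unique $u \in \fs_{2m}$ with $\alpha u^{-1}\beta^{-1} = v$, so that $h(u) = \pi$. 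The reverse direction of Theorem \ref{thm:max_even} then puts $u$ in $[\id, w_{2m}]$.

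The only real obstacle is the bookkeeping required to recognize that $\eta$ was engineered so that its image is cut out precisely by the two ``automatic'' coordinates of $C_{2m+2}$; once that observation is in hand, surjectivity is immediate and the corollary drops out without further work.
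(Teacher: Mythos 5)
Your proposal is correct and follows the same route as the paper, whose entire proof is the one-line observation that $h$ induces a bijection from $[\id, w_{2m}]$ onto $C_{2m+2}$; you simply make explicit the injectivity of $u \mapsto \alpha u^{-1}\beta^{-1}$ and $\eta$, and the fact (already noted in the paper's definition of $C_{2n}$) that $\pi(2)=1$ and $\pi(2m+1)=2m+2$ are forced, so the image of $\eta$ contains $C_{2m+2}$. One small slip: the positions to read off when inverting $\eta$ are $1, 3, 4, \ldots, 2m, 2m+2$ (all positions except $2$ and $2m+1$), not $1, 3, 5, \ldots, 2m-1, 2m, 2m+2$; this does not affect the argument.
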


\begin{proof}
The function $h$ induces a bijection from $[\id, w_{2m}]$ to $C_{2m+2}$.
\end{proof}

\begin{exmpl}
For $m=2$ we have $w_4=(3412)$ and
\begin{align*}
[(1234), (3412)] = \{ & (1234), (1324), (1423), (1432), \\
                                & (2314), (2413), (3214), (3412) \}  \; .
\end{align*}
The function $u \to \eta((1324)u^{-1}(2413))$ sends the 
permutations in $[(1234), (3412)]$ to the corresponding permutations in 
\begin{align*}
C_6 = \{ & (415263), (315263), (314265), (514263), \\
			& (215364), (214365), (415362), (514362)\}
\end{align*}
\end{exmpl}

\end{document}